\newtheorem{thm}{Theorem}[section]
\newtheorem{lem}[thm]{Lemma}
\theoremstyle{remark} 
\newtheorem{rem}[thm]{Remark}
 \crefname{rem}{Remark}{Remarks}
 \Crefname{rem}{Remark}{Remarks}
\newtheorem{ex}[thm]{Example}
\theoremstyle{definition} 
\newtheorem{definition}[thm]{Definition} 
\titleformat*{\section}{\normalsize \bfseries \filcenter}
\titleformat*{\subsection}{\normalsize \bfseries }
\newtheorem{mainthm}{Theorem}
\Crefname{mainthm}{Theorem}{Theorems}
\newtheorem{maincor}[mainthm]{Corollary}
\Crefname{maincor}{Corollary}{Corollaries}
\Crefname{mainquestion}{Question}{Questions}
\Crefname{mainconj}{Conjecture}{Conjectures}
\def\namedlabel#1#2{\begingroup
   \def\@currentlabel{#2}
   \label{#1}\endgroup
}
\DeclareFontFamily{U}{mathx}{}
\DeclareFontShape{U}{mathx}{m}{n}{<-> mathx10}{}
\DeclareSymbolFont{mathx}{U}{mathx}{m}{n}
\DeclareMathAccent{\widehat}{0}{mathx}{"70}
\DeclareMathAccent{\widecheck}{0}{mathx}{"71}
\title{\normalsize \textbf{Rational points of fixed denominator in real toric arrangements}}
\author{\normalsize Andrew Hanlon and Davis Painter}
\date{}
\newcommand{\wt}{\widetilde}
\newcommand{\RR}{\mathbb R}
\newcommand{\ZZ}{\mathbb Z}
\newcommand{\NN}{\mathbb N}
\newcommand{\PP}{\mathbb P}
\DeclareMathOperator{\lcm}{lcm}
\newcommand{\Addresses}{{
  \bigskip
  \footnotesize

  \noindent A.~Hanlon, \textsc{Department of Mathematics, University of Oregon}\par\nopagebreak
  \noindent \textit{E-mail address}: \hyperlink{mailto:ahanlon@uoregon.edu}{\texttt{ahanlon@uoregon.edu}}

  \medskip

  \noindent D.~Painter, \textsc{Dartmouth College}\par\nopagebreak
  \noindent \textit{E-mail address}: \hyperlink{mailto:davis.g.painter.26@dartmouth.edu}{\texttt{davis.g.painter.26@dartmouth.edu}}

  \medskip

}}
\begin{document}

\maketitle

\begin{abstract}
    We give a sufficient condition on a positive integer $m$ for every stratum of a given real toric hyperplane arrangement to contain a rational point of denominator $m$. As a consequence, we give a sufficient condition on $m$ for the degree $m$ Frobenius pushforward of the structure sheaf on a smooth toric variety to contain all possible summands in the Picard group.
\end{abstract}

\section{Introduction}

Given a set of vectors $A = \{ v_1, \hdots, v_k \} \subset \ZZ^n$, we have functions 
\begin{equation} \label{eq:functions}
    f_i\colon \RR^n/\ZZ^n \to \RR/\ZZ 
\end{equation}
given by $f_i(x) = v_i \cdot x$ where $\cdot$ is the dot product. 
Further, we obtain a real toric hyperplane arrangement given by the linear subtori $h_i = f_i^{-1}(0).$
In this paper, we study the stratification $\mathcal{S}_A$ of the torus induced by this hyperplane arrangement, which we define precisely in \cref{sec:strata}. 
Results on enumerating the strata and other combinatorial properties of real toric hyperplane arrangements have been obtained in \cite{ehrenborg2009affine, lawrence2011enumeration, chandrasekhar2017face, bergerova2023symmetry}. 
There is also interest in and a close relation to complex toric arrangements as recently studied in \cite{de2005geometry, moci2012tutte, d2012ehrhart,d2015minimality}. 

Here, we will focus on finding rational points of bounded denominator in all the strata. 
More precisely, for $m \in \NN$, let $L_m$ be the image of $\frac{1}{m} \ZZ^n$ in $\RR^n/\ZZ^n$. Moreover, assuming $A$ contains a basis of $\RR^n$, let
\begin{equation} \label{eq:determinantlcm} 
D_A = \lcm\left\{ \left| \det\begin{pmatrix} \vert &  & \vert \\ v_{i_1} & \hdots & v_{i_n} \\ \vert &  & \vert \end{pmatrix} \right| \neq 0 : i_j \in \{1, \hdots, k\} \right\} 
\end{equation}
be the least common multiple of all absolute values of nonzero determinants of matrices formed by a set of $n$ vectors in $A$. 
Our main result is a sufficient condition for $L_m$ to intersect every strata.

\begin{mainthm} \label{thm:main}
   Suppose that $A$ contains a basis of $\RR^n$. If $m$ is a multiple of $D_A$ and $m \geq (n+1)D_A$, then $L_m \cap S \neq \emptyset$ for all $S \in \mathcal{S}_A$. 
\end{mainthm}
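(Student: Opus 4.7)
My plan is to lift the problem to the universal cover. Pulling back the toric arrangement along $\RR^n \to \RR^n/\ZZ^n$ produces the $\ZZ^n$-periodic affine hyperplane arrangement $\mathcal{H} = \{v_i \cdot x = c : i \in \{1,\ldots,k\},\, c \in \ZZ\}$, and every stratum $S \in \mathcal{S}_A$ is the image in $\RR^n/\ZZ^n$ of some relatively open cell $\tilde{S}$ of $\mathcal{H}$; so it suffices to find a point of $\frac{1}{m}\ZZ^n$ inside $\tilde{S}$. Because $A$ contains a basis of $\RR^n$, every cell of $\mathcal{H}$ is sandwiched between parallel shifts along each basis direction, so $\overline{\tilde{S}}$ is a bounded convex polytope of some dimension $n-d$, where $d$ is the codimension of $S$.

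The first step is the observation that every vertex $w$ of $\overline{\tilde{S}}$ lies in $\frac{1}{D_A}\ZZ^n$: such a vertex is the unique solution of $n$ linearly independent integer equations $v_{i_j} \cdot w = c_j$ with $v_{i_j} \in A$, so by Cramer's rule $w$ has entries with common denominator dividing $|\det(v_{i_1}, \ldots, v_{i_n})|$, which in turn divides $D_A$ by \cref{eq:determinantlcm}. Next, I select $n-d+1$ affinely independent vertices $w_0, \ldots, w_{n-d}$ of $\overline{\tilde{S}}$ --- these exist because $\overline{\tilde{S}}$ is the convex hull of its vertices and has dimension $n-d$ --- and I form their convex hull $\Delta$, an $(n-d)$-simplex inside $\overline{\tilde{S}}$ with the same affine hull. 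Since $\Delta$ and $\overline{\tilde{S}}$ share an affine hull, any relatively open neighborhood of a point of $\mathrm{relint}(\Delta)$ remains inside $\overline{\tilde{S}}$, giving $\mathrm{relint}(\Delta) \subset \mathrm{relint}(\overline{\tilde{S}}) = \tilde{S}$. Writing $m = qD_A$ with $q \geq n+1$, I then choose positive integers $a_0, \ldots, a_{n-d}$ summing to $q$ --- possible since $q \geq n-d+1$, for instance $a_0 = q - (n-d)$ and $a_i = 1$ for $i \geq 1$ --- and set
\[
    p = \sum_{i=0}^{n-d} \frac{a_i}{q}\, w_i.
\]
Positivity of the weights places $p \in \mathrm{relint}(\Delta) \subset \tilde{S}$, while $w_i \in \frac{1}{D_A}\ZZ^n$ together with $\sum_i a_i = q$ yields $p \in \frac{1}{qD_A}\ZZ^n = \frac{1}{m}\ZZ^n$, as required.

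The most delicate point is the identification $\mathrm{relint}(\overline{\tilde{S}}) = \tilde{S}$ and the accompanying containment $\mathrm{relint}(\Delta) \subset \mathrm{relint}(\overline{\tilde{S}})$ --- so that $p$ actually lands in the open stratum and not on a proper face --- which rely on unpacking the definition of $\mathcal{S}_A$ together with a standard convexity fact. The bound $m \geq (n+1)D_A$ in the hypothesis emerges precisely because the construction needs positive weights summing to $q = m/D_A$ on up to $n+1$ vertices, with the worst case attained by the full-dimensional strata.
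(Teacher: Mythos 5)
Your proof is correct and follows essentially the same strategy as the paper: lift a stratum to a bounded polytope in $\RR^n$, use Cramer's rule to show its vertices lie in $\frac{1}{D_A}\ZZ^n$, pick a full-dimensional subsimplex on affinely independent vertices, and take a strictly positive rational convex combination to produce a relative-interior point of denominator $m$. The paper packages the same ideas into two lemmas (rescaling to a lattice polytope, then finding an interior lattice point in $(p+1)\Delta$), while you fold them together and handle $q = m/D_A > n+1$ directly by choosing weights summing to $q$ rather than deducing it from the $q = n+1$ case.
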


We note that \cref{thm:main} is sharp in the sense that there exist examples such that $L_m \cap S \neq \emptyset$ for all $S \in \mathcal{S}_A$ if and only if $m = \ell D_A$ with $\ell \geq n+1$ (see \cref{ex:p1}).
When $A$ does not contain a basis of $\RR^n$, we can reduce to a lower-dimensional case of \cref{thm:main} by working with the span of $A$ and the appropriate lattice.

\cref{thm:main} is of particular interest to us due to recent results on toric varieties. 
In an influential note \cite{bondal2006derived}, Bondal indicated that the stratification $\mathcal{S}_A$ can be used to study homological properties of a toric variety $X_\Sigma$ such that $A$ is the set of primitive generators of the fan $\Sigma$. 
For example, this perspective has been fruitful for studying homological mirror symmetry for toric varieties \cite{fang2011categorification,kuwagaki2020nonequivariant,hanlon2022aspects} and for understanding generation of the derived category \cite{favero2023rouquier, hanlon2023resolutions}.
As we explain in more detail in \cref{sec:toricvar}, there is a map from $L_m$ to the summands of the pushforward of $\mathcal{O}_{X_\Sigma}$ by the degree $m$ toric Frobenius map $F_m$. 
This map is surjective when $L_m \cap S \neq \emptyset$ for all $S \in \mathcal{S}_A$. Thus, we obtain the following from \cref{thm:main}.

\begin{maincor} \label{cor:main}
    Suppose that $X_\Sigma$ is a smooth toric variety with no torus factors and $A$ is the set of primitive generators of $\Sigma(1)$. 
    If $m = \ell D_A$ with $\ell \geq \dim(X_\Sigma) +1$, then every possible summand of the pushforward of $\mathcal{O}_{X_\Sigma}$ under toric Frobenius appears in $(F_m)_* \mathcal{O}_{X_\Sigma}$. 
\end{maincor}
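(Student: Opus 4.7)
The plan is to reduce \cref{cor:main} directly to \cref{thm:main} via the dictionary between lattice cosets in $L_m$, strata of $\mathcal{S}_A$, and summands of $(F_m)_*\mathcal{O}_{X_\Sigma}$ that is developed in \cref{sec:toricvar}. Since the introduction already asserts both that there is a map $L_m \to \{\text{summands of }(F_m)_*\mathcal{O}_{X_\Sigma}\}$ and that this map is surjective onto the set of ``possible summands'' whenever $L_m \cap S \neq \emptyset$ for all $S \in \mathcal{S}_A$, the entire content of the corollary will come from feeding the numerical hypothesis into \cref{thm:main}.

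First I would verify that the hypotheses of \cref{thm:main} are satisfied with $n = \dim(X_\Sigma)$. Because $X_\Sigma$ has no torus factors, the primitive ray generators $A$ span $N_\RR \cong \RR^n$, so $A$ contains a basis of $\RR^n$ and $D_A$ is well-defined (in fact smoothness forces every maximal cone to give a $\ZZ$-basis of the lattice, so only non-maximal $n$-subsets of $A$ contribute nontrivially to the $\operatorname{lcm}$ defining $D_A$). The assumption $m = \ell D_A$ with $\ell \geq n+1$ is then exactly the numerical input of \cref{thm:main}, which yields $L_m \cap S \neq \emptyset$ for every stratum $S \in \mathcal{S}_A$.

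Next I would invoke the construction in \cref{sec:toricvar}: the set of line bundles that can appear as summands of $(F_m)_*\mathcal{O}_{X_\Sigma}$ is enumerated by $\mathcal{S}_A$ (this is where smoothness and the no-torus-factors hypothesis enter cleanly), and the line bundle indexed by a given stratum $S$ actually appears as a summand precisely when $L_m \cap S \neq \emptyset$. Combining this with the previous paragraph shows that every possible summand does appear, which is the statement of \cref{cor:main}.

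The only step that is not purely formal given \cref{thm:main} is the $L_m \to \mathcal{S}_A \to \mathrm{Pic}(X_\Sigma)$ dictionary, and this is the piece that \cref{sec:toricvar} is intended to establish rather than something that must be proved inside the corollary itself. The main obstacle, therefore, is not in the proof of \cref{cor:main} but in making sure \cref{sec:toricvar} sets up the correspondence with the right indexing: one must check that the ``possible summands'' of toric Frobenius correspond to the full set of strata of $\mathcal{S}_A$, so that surjectivity of the map from $L_m$ onto strata translates to surjectivity onto possible summands and not merely onto a subset of them.
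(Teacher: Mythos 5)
Your proposal is correct and takes essentially the same route as the paper: the paper's proof of \cref{cor:main} consists precisely of noting that the no-torus-factor hypothesis makes $A$ contain a basis of $\RR^n$, applying \cref{thm:main}, and then using the dictionary of \cref{sec:toricvar} (Bondal's decomposition \eqref{eq:pushforwardeq} together with the fact that the assignment \eqref{eq:summand} is constant on strata and surjects onto all possible summands) to convert $L_m \cap S \neq \emptyset$ for every $S \in \mathcal{S}_A$ into the appearance of every possible summand. One tiny imprecision: your ``precisely when'' overstates the correspondence, since a given line bundle may also arise from a different stratum, but only the implication you actually use --- that $L_m \cap S \neq \emptyset$ forces the summand attached to $S$ to appear --- is needed, so this does not affect the argument.
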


The condition that $X_\Sigma$ has no torus factors is a simple way to impose that $A$ contains a basis of $\RR^n$.
As with \cref{thm:main}, it is relatively straightforward to generalize \cref{cor:main} to other smooth toric varieties.
We also expect that the condition that $X_\Sigma$ is smooth can be relaxed, but we include it as results in the literature on the splitting of toric Frobenius are written under this assumpotion (see \cref{rem:smooth}). 

The remainder of this paper is organized as follows. \cref{sec:strata} more carefully defines $\mathcal{S}_A$ and establishes further notation. The proof of \cref{thm:main} is confined to \cref{sec:proof} where it is broken down into two lemmas that are proved with elementary arguments. Our application to toric varieties (\cref{cor:main}) is explained in \cref{sec:toricvar}. Finally, \cref{sec:examples} contains some examples and further questions. 

\subsection{Acknowledgements}
We are grateful to Daniel Erman and Jeff Hicks for useful conversations. This paper builds on undergraduate research work completed for credit at Dartmouth College by the second author, and we thank Dartmouth and the Department of Mathematics for supporting this activity.

Both authors were supported by NSF grant DMS-2404882, which was transferred to NSF grant DMS-2549013.

\section{Toric hyperplane stratifications} \label{sec:strata}

In this section, we precisely define the stratification $\mathcal{S}_A$ of the torus $\RR^n/\ZZ^n$ that we associate to $A = \{ v_1, \hdots, v_k \} \subset \ZZ^n$. 
Recall from \eqref{eq:functions} that we define $f_i$ to be the circle-valued function on the torus given by $f_i(x) = v_i \cdot x$ for $i = 1,\hdots, k$ and consider the hyperplanes $h_i = f_i^{-1}(0)$. 
In other words, the vectors $v_i$ are normal vectors to the toric hyperplane arrangement given by the $h_i$.

Further, for any $I \subset \{1, \hdots, k\}$, we can consider the closed subset
\[ h_I = \{ x \in \RR^n/\ZZ^n : f_i(x) = 0 \text{ for all } i \in I \} \]
and set $S_{I, 1}, \hdots, S_{I, p_I}$ to be the connected components of 
\[ h_I \setminus \left( \bigcup_{j \not \in I} h_j \right) \]
giving us a stratification
\[ \RR^n/\ZZ^n = \bigcup_{I \subset \{1, \hdots, k\}} \bigsqcup_{j = 1}^{p_I} S_{I, j} \]
by the locally closed strata $S_{I, j}$.
Note that $h_\emptyset = \RR^n/ \ZZ^n$ so the top-dimensional strata are of the form $S_{\emptyset, j}$.
The collection of nonempty locally closed subsets obtained in this way precisely describes the stratification induced by the toric hyperplane arrangement.

\begin{definition}
    We call
    \[ \mathcal{S}_A = \{ S_{I,j} \} \setminus \{ \emptyset \} \]
    the stratification of $\RR^n/\ZZ^n$ induced by the set of vectors $A$. 
    When the precise nature of the stratum is unimportant, we will often drop the subscripts and write $S \in \mathcal{S}_A$ to denote one of the locally closed strata.
\end{definition}

    Note that each subtorus $h_i$ can equivalently be defined as the image of the family of hyperplanes $\wt h_i = \{ x \in \RR^n: v_i \cdot x  \in \ZZ \}$ under the quotient map from $\RR^n$ to $\RR^n/\ZZ^n$. 
    We can also describe the strata as the image of locally closed subsets of $\RR^n$ as follows. 
    Given $I \subset \{1, \hdots, k \}$ and $u \in \ZZ^k$, define the set 
    \begin{equation} \label{eq:liftedpolytope}
    \wt S_{I,u} = \{ x \in \RR^n : x \cdot v_i = u_i \text{ for } i \in I \text{ and } u_j < x \cdot v_j < u_j + 1 \text{ for } j \not \in I \}
    \end{equation}
    Then, we have that $\RR^n$ is the disjoint union of the locally closed polytopes $\wt S_{I,u}$ and we obtain each $S_{I,j}$ as the image under the quotient map of some $\wt S_{I,u}$. 
    Typically, each $S_{I,j}$ is in fact the image of infinitely many $\wt S_{I,u}$. 
    We will refer to each $\wt S_{I,u}$ which maps to $S_{I,j}$ as a lift of $S_{I,j}$. 

Examples of these stratifications and figures depicting those examples appear in \cref{sec:examples}. 

\section{Proof of the main theorem} \label{sec:proof}

We will deduce \cref{thm:main} from two lemmas. 
The first is an exercise in linear algebra that allows us to rescale all lifts of strata to lattice polytopes. Here, we use the term lattice polytope to mean an intersection of half-spaces whose vertices lie in $\ZZ^n$.

\begin{lem} \label{lem:rescale}
    Assume that $A$ contains a basis of $\RR^n$ and let $D_A$ be as in \eqref{eq:determinantlcm}. 
    If $S \in \mathcal{S}_A$ and $\Delta = \overline{\wt{S}}$ is the closure of any lift $\wt{S}$ of $S$ as in \eqref{eq:liftedpolytope}, then the rescaling $D_A \Delta$ is a bounded lattice polytope.
\end{lem}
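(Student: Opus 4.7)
The plan is to realize $\Delta$ as an intersection of finitely many closed affine half-spaces whose defining normal vectors lie in $A$, and then handle boundedness and integrality (after rescaling) as two separate steps.

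First, using \eqref{eq:liftedpolytope}, I would write $\Delta$ explicitly as the set of $x \in \RR^n$ satisfying $x \cdot v_i = u_i$ for $i \in I$ together with $u_j \leq x \cdot v_j \leq u_j + 1$ for $j \notin I$. For boundedness, I fix a basis $\{v_{j_1}, \ldots, v_{j_n}\} \subset A$ provided by the hypothesis. For each $j_\ell$, the associated constraint, whether the equality coming from $j_\ell \in I$ or the pair of inequalities coming from $j_\ell \notin I$, confines $x \cdot v_{j_\ell}$ to a bounded closed interval, and since the $v_{j_\ell}$ span $\RR^n$ this forces $x$ itself to lie in a bounded parallelepiped; thus $\Delta$ is bounded.

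For the lattice-vertex claim, I would argue at the level of vertices of $\Delta$. Any vertex $p$ is cut out by $n$ linearly independent active constraints from the defining system above, each of the form $x \cdot v = c$ for some $v \in A$ and $c \in \ZZ$. Collecting these gives a matrix equation $Mp = b$ where the rows of $M$ are integer vectors drawn from $A$, $b \in \ZZ^n$, and $\det(M)$ is a nonzero integer. By the definition of $D_A$ in \eqref{eq:determinantlcm}, $|\det(M)|$ divides $D_A$. Cramer's rule then expresses each coordinate of $p$ as $\det(M_i)/\det(M)$ with $\det(M_i) \in \ZZ$, so $D_A p \in \ZZ^n$. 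Applying this to every vertex and combining with boundedness yields that $D_A \Delta$ is a bounded lattice polytope.

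The only mildly subtle point is the boundedness step, which requires noting that the basis constraint survives regardless of whether each basis element is in $I$ (where the slab degenerates to a hyperplane) or not; once that observation is in place, the remainder is a direct application of Cramer's rule together with the definition of $D_A$.
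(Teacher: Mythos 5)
Your proposal is correct and follows essentially the same route as the paper: boundedness via a basis from $A$ confining $x\cdot v_{j_\ell}$ to bounded intervals, and integrality of vertices by solving the $n$ active integer constraints $x\cdot v = c$ and dividing by the determinant (your Cramer's rule step is the paper's cofactor-matrix computation in different clothing). No gaps to report.
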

\begin{proof} 
We first note that $\Delta$ is a polytope by definition. 
Since $A$ contains a basis of $\RR^n$, every lift of a stratum is contained within a parallelepiped of volume given by the determinant of such a basis. 
Thus, $\Delta$ is bounded. 

It remains to check that if $w \in \Delta$ is a vertex, then $D_A w \in \ZZ^n$. 
Note that $w$ must lie on at least $n$ hyperplanes $\tilde{h}_i$. 
Assume, without loss of generality, that $v_1, \hdots, v_n$ are linearly independent and there exists $k_1, \hdots, k_n \in \ZZ$ such that 
\[ v_i \cdot w = k_i \]
for $i = 1, \hdots, n$. 
In other words, we have $Bw = k$ where $k \in \ZZ^n$ has $i$th component equal to $k_i$ and $B$ is the matrix whose $i$th row is $v_i$.
We then have
\[ w = B^{-1} k = \frac{1}{\det(B)} C^T k\]
where $C$ is the matrix of cofactors of $B$. 
In particular, $C$ has integer entries.
Since $D_A/\det(B)$ is an integer, we see that $D_A w \in \ZZ^n$ as needed.
\end{proof} 

Our second lemma gives the minimum amount of rescaling required to find an interior lattice point, i.e., a point in $\ZZ^n$, in any lattice polytope. 
The result follows from Ehrhart reciprocity and possibly other results about enumerating lattice points in lattice polytopes, but we include an elementary proof. 

\begin{lem} \label{lem:interiorpt}
    Suppose that $\Delta$ is a $p$-dimensional bounded lattice polytope in $\RR^n$. Then $k\Delta$ contains a lattice point in its relative interior for all $k \geq p + 1$. 
\end{lem}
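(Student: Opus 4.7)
The plan is to produce the required lattice point explicitly by summing $p+1$ affinely independent vertices of $\Delta$, then extending to larger $k$ by convex combination with a single vertex. First, since $\Delta$ is $p$-dimensional and every vertex of $\Delta$ lies in $\ZZ^n$, the affine hull $H$ of $\Delta$ carries the structure of a $p$-dimensional affine subspace whose intersection with $\ZZ^n$ is an affine sublattice. I would work entirely inside $H$, where ``relative interior'' is simply the interior. (Nothing in the argument requires identifying $H$ with a coordinate subspace; I just use that lattice points in $H$ form a discrete group closed under affine combinations with integer coefficients summing to $1$, and more generally that sums of lattice points are lattice points.)

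Next, for the case $k = p+1$, I would choose $p+1$ affinely independent vertices $v_0, v_1, \ldots, v_p$ of $\Delta$, which exist because $\Delta$ is $p$-dimensional and its vertices affinely span $H$. Let $\Sigma = \mathrm{conv}(v_0, \ldots, v_p) \subseteq \Delta$; this is a $p$-dimensional simplex, so its centroid
\[ c = \tfrac{1}{p+1}(v_0 + v_1 + \cdots + v_p) \]
lies in the relative interior of $\Sigma$. Since $\Sigma$ is full-dimensional inside $H$, the relative interior of $\Sigma$ is open in $H$ and contained in $\Delta$, hence contained in the relative interior of $\Delta$. Therefore $(p+1)c = v_0 + \cdots + v_p$ is a lattice point in the relative interior of $(p+1)\Delta$.

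For $k > p+1$, I would fix any vertex $v$ of $\Delta$ and form
\[ w_k = (p+1)c + (k-p-1)v \in \ZZ^n. \]
Writing $w_k = k \cdot \left( \tfrac{p+1}{k}\, c + \tfrac{k-p-1}{k}\, v \right)$ exhibits $w_k$ as $k$ times a convex combination of $c \in \mathrm{relint}(\Delta)$ and $v \in \Delta$ with strictly positive weight $(p+1)/k$ on $c$. By the standard fact that such a convex combination stays in the relative interior of a convex set, $w_k/k \in \mathrm{relint}(\Delta)$, so $w_k$ is a lattice point in the relative interior of $k\Delta$, as required.

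The argument is essentially elementary; the one point I would be careful about is the reduction to the relative interior, making sure that ``$p+1$ affinely independent vertices span a $p$-simplex whose relative interior is open in $H$ and therefore inside the relative interior of $\Delta$'' is stated cleanly. Everything else is just observing that a sum of lattice points is a lattice point and that convex combinations with positive weight on an interior point remain interior.
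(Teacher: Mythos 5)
Your proof is correct and takes essentially the same route as the paper: both produce the lattice point $v_0 + \cdots + v_p$ from $p+1$ affinely independent vertices, i.e.\ $(p+1)$ times the centroid of a $p$-simplex contained in $\Delta$ with the same affine hull. The only differences are cosmetic: the paper checks that this point lies in the relative interior via explicit facet normals of the simplex and leaves the passage to $k > p+1$ implicit, whereas you cite the standard centroid fact and spell out the shift by $(k-p-1)v$.
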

\begin{proof} 
We will reduce to the case where $\Delta$ is a simplex as follows. Let $w_0, \hdots, w_p$ be any $p+1$ vertices of $\Delta$ whose convex hull $\Delta'$ is $p$-dimensional. 
Note that such a collection of vertices must exist since $\Delta$ is a $p$-dimensional bounded polytope.
Every facet of $\Delta'$ contains all but exactly one of the $w_j$ so we can choose outward normal vectors to these facets $\alpha_0, \hdots, \alpha_p$ such that $\alpha_j \cdot w_i = c_j$ if $i \neq j$ and $\alpha_j \cdot w_i < c_j$ if $i = j$ for some constants $c_j$.  

We then observe that 
\[ \alpha_j \cdot (w_0 + \hdots + w_p) < (p+1) c_j \]
for $j = 0, \hdots, p$.
Thus, $w_0 + \hdots + w_p$ is a lattice point in the relative interior of $(p+1)\Delta'$ and hence $(p+1) \Delta$.
It follows that $k \Delta$ contains a lattice point in its relative interior for all $k \geq p + 1$.
\end{proof}

We are now ready to prove \cref{thm:main}.

\begin{proof}[Proof of \cref{thm:main}] Let $S \in \mathcal{S}_A$ be any stratum. 
Further, let $\tilde{S}$ be any lift of $S$ and let $\Delta$ be the closure of $\tilde{S}$.
Note that $L_m \cap S \neq \emptyset$ if and only if $\Delta$ contains a rational point with denominator $m$ in its relative interior.
That is, we need $m \Delta$ to contain a lattice point in its relative interior. 
By \cref{lem:rescale}, $D_A \Delta$ is a bounded lattice polytope of dimension less than or equal to $n$.
Thus, by \cref{lem:interiorpt}, $k D_A \Delta$ contains an interior lattice point in its relative interior for $k \geq n +1$. 
\end{proof}

\section{Application to toric varieties} \label{sec:toricvar}

In this section, we explain how \cref{cor:main} follows from \cref{thm:main}.
Outside of this section, we have made things concrete by considering $A \subset \ZZ^n$ and working with the dot product, but to be more consistent with the literature on toric varieties, we will use a more abstract set up in this section. 

A toric variety $X_\Sigma$ (over any fixed ground field) can be constructed from a fan $\Sigma$ in $N_\RR = N \otimes_\ZZ \RR$ where $N$ is a lattice.
Any such $X_\Sigma$ admits a family of toric morphisms
\[
F_m \colon X_\Sigma \to X_\Sigma 
\]
indexed by $m \in \NN$ called the toric Frobenius morphisms. 
On the dense torus $N \otimes_\ZZ \Bbbk^*$, $F_m$ is simply the $m$th power map.
When $X_\Sigma$ is smooth, the pushforward of the structure sheaf $(F_m)_* \mathcal{O}_{X_\Sigma}$ always splits as a direct sum of line bundles as originally shown in \cite{thomsen2000frobenius} (see also \cite{bogvad1998splitting, achinger2015characterization}). 
Moreover, for sufficiently large and divisble $m$, all possible summands appear in $(F_m)_* \mathcal{O}_{X_\Sigma}$. 
Our \cref{cor:main} gives a precise sufficient condition on exactly how large and divisible $m$ must be in order for all possible summands to appear as we now discuss. 

Let $M$ be the dual lattice to $N$. 
Bondal observed \cite{bondal2006derived} (as spelled out in \cite[Corollary 5.5]{hanlon2023resolutions}) that 
\begin{equation} \label{eq:pushforwardeq}
    (F_m)_* \mathcal{O}_{X_\Sigma} = \bigoplus_{x \in M/m M} \mathcal{O}_{X_\Sigma} \left( \sum_{\rho \in \Sigma(1)} \left \lfloor \frac{ -\langle x , u_\rho \rangle }{m} \right \rfloor D_\rho \right) 
\end{equation} 
where $u_\rho$ is the primitive generator of the ray $\rho \in \Sigma(1)$, $D_\rho$ is the toric divisor corresponding to $\rho$, and $\langle x, u_\rho \rangle$ denotes the pairing of any lift of $x$ to $M$ with $u_\rho$. 
As a consequence of \eqref{eq:pushforwardeq}, all possible summands have the form
\begin{equation} \label{eq:summand}
    \mathcal{O}_{X_\Sigma} \left( \sum_{\rho \in \Sigma(1)} \lfloor -\langle x , u_\rho \rangle \rfloor D_\rho \right)
\end{equation}
for some $x \in M_\RR/M$ where $M_\RR = M \otimes_\ZZ \RR$. 
Further, we observe that \eqref{eq:summand} is constant on each $S \in \mathcal{S}_A$ where $A$ is the set of primitive generators of $\Sigma$ and $\mathcal{S}_A$ is now a stratification of $M_\RR/M$. 
Thus, the assigment $ x \mapsto$ \eqref{eq:summand} gives a surjective map from $\mathcal{S}_A$ to the summands of toric Frobenius in the Picard group of $X_\Sigma$ where each $x \in L_m$ maps to a summand of $(F_m)_* \mathcal{O}_{X_\Sigma}$ by \eqref{eq:pushforwardeq}. 
Therefore, we now see that \cref{cor:main} follows from \cref{thm:main}

We conclude this section with a few remarks. 

\begin{rem} \label{rem:smooth}
    The assumption that $X_\Sigma$ is smooth is likely unnecessary, and we only impose this assumption as the splitting result \eqref{eq:pushforwardeq} is only proved explicitly in the literature when $X_\Sigma$ is smooth.
    In general, we expect that $(F_m)_* \mathcal{O}_{X_\Sigma}$ still splits into a sum of reflexive sheaves of rank one given by \eqref{eq:pushforwardeq}. 
    Without that generalization, \cref{thm:main} still implies that all line bundles of the form \eqref{eq:summand} (the Thomsen collection in the sense of \cite[Section 2.3]{hanlon2023resolutions}) can be obtained using rational $x$ with denominator $m$ satisfying the conditions of \cref{thm:main}. 
\end{rem}

\begin{rem} \label{rem:git}
    Given a finite set $A \subset N$, there are often many smooth toric varieties for which the primitive generators of the fan are a subset of $A$. 
    If $m$ satisfies the conditions of \cref{thm:main}, the degree $m$ Frobenius pushforward of the structure sheaf on any of these toric varieties will contain all possible summands. 
    For example, $A$ determines a torus action on $\mathbb{A}^{\Sigma(1)}$ whose GIT quotients are all toric varieties with fans generated by a subset of $A$.
\end{rem}

\begin{rem} As noted in \cite{achinger2015characterization}, the summands of $(F_m)_* \mathcal{O}_{X_\Sigma}$ are lattice points in $\frac{m-1}{m} Z$ where $Z$ is a certain zonotope in the real Picard group of $X_\Sigma$. 
Moreover, the possible summands are in bijection with the lattice points of $Z$ that lie in the star of $0 \in Z$ by \cite[Proposition 5.14]{hanlon2023resolutions}.
Thus, we obtain the curious conclusion that if $m$ satisfies the conditions of \cref{thm:main}, then $\frac{m-1}{m} Z$ contains all lattice points in the star of $0$.
\end{rem}

\section{Examples and further questions} \label{sec:examples}

In this concluding section, we give a few examples to illustrate \cref{thm:main} and pose several questions that would be interesting to study further.

\begin{ex} \label{ex:p1}
    Set $A = \{ e_1, \hdots, e_n,  -e_1 - \hdots - e_n \}$ where $e_j$ are the standard basis vectors of $\RR^n$. 
    When $n=2$, there are two strata of dimension two, three strata of dimension one, and one stratum of dimension zero as depicted in \cref{fig:p1}.
    In any dimension, one of the top-dimensional strata is the interior of the simplex $\Delta$ with vertices $0, e_1, \hdots, e_n$. 
    As $D_A = 1$ and $\Delta$ has no interior rational points of denominator less than $n+1$ (that is, $L_{k} \cap \mathrm{int}(\Delta) = \emptyset$ when $k \leq n$), we see that the converse of \cref{thm:main} holds in this case.
    The set $A$ in this example consists of the primitive generators of the rays of the toric variety $X_\Sigma = \PP^n$. 
\end{ex}

\begin{ex} \label{ex:morecomplicated}
    Consider $A = \{ e_1, e_2, e_1 + e_2, -e_1 + 2e_2 \}$ where $e_1,e_2$ are the standard basis vectors of $\RR^2$.
    In this example, there are six strata of dimension two, ten strata of dimension one, and four strata of dimension zero. 
    See \cref{fig:morecomplicated}.
    Here, $D_A = 6$, but we can see that $L_{12} \cap S \neq \emptyset$ for all $S \in \mathcal{S}_A$, which is a smaller denominator than \cref{thm:main} produces.
\end{ex}

In light of \cref{ex:p1} and \cref{ex:morecomplicated}, it is natural to ask when the condition in \cref{thm:main} is also necessary or when a smaller $m$ guarantees that $L_m \cap S \neq \emptyset$ for all $S \in \mathcal{S}_A$. 
When $n=1$, it is straightforward to check that the condition in \cref{thm:main} is always necessary. 
In general, as indicated by the proof of \cref{thm:main}, it seems natural to approach this problem by determining which lattice polytopes have rescalings with no lattice points in their relative interiors and when such polytopes can appear as closures of lifts of strata.
It would be interesting to study this question further and to understand what information beyond $D_A$ is needed to determine a necessary and sufficient condition on $m$. 

In fact, to get a better bound for the purposes of \cref{cor:main}, we can consider larger strata as we now explain. 
The assignment \eqref{eq:summand} of a line bundle on $X_\Sigma$ to a point in $M_\RR/M$ is constant on larger sets than the strata in $\mathcal{S}_A$, and we could simply define a new stratification by the level sets of \eqref{eq:summand}.
For instance, in \cref{ex:p1}, the three one-dimensional strata all become part of the same two-dimensional stratum under this definition.
Using these larger strata would still give a simultaneous bound for all toric varieties that arise as quotients of the same action on affine space as in \cref{rem:git}.
Thus, we might hope for a smaller sufficient condition in \cref{cor:main}, and it does not appear that the methods employed in this paper are particularly well-suited for determining such a bound.

Finally, we end with a more amusing question. 
In an earlier and more involved approach to \cref{thm:main} with $n =2$, we attempted to find a bound on $m$ by controlling the area of all the strata that appear.
In many examples, we found that as long as $A$ contains three vectors none of which is a scalar multiple of another (still assuming $n=2$) then the stratum of smallest area is a triangle. 
Does this always hold?
If so, is there a higher dimensional generalization?

\begin{figure}[!b] 
\centering 

\begin{tikzpicture}[scale=1.2]

    \draw 
        (-4,0.75) -- (-4,4.25);
    \draw 
        (-3,0.75) -- (-3,4.25);
    \draw 
        (-2,0.75) -- (-2,4.25);
    \draw 
        (-1,0.75) -- (-1,4.25);
    \draw 
        (-4.25,1) -- (-0.75,1);
        \draw 
        (-4.25,2) -- (-0.75,2);
    \draw 
        (-4.25,3) -- (-0.75,3);
    \draw 
        (-4.25,4) -- (-0.75,4);
    \draw 
        (-4.25,4.25) -- (-0.75,0.75);
    \draw 
        (-3.25,4.25) -- (-0.75,1.75);
    \draw 
        (-2.25,4.25) -- (-0.75,2.75);
    \draw 
        (-4.25,3.25) -- (-1.75,0.75);
    \draw 
        (-4.25,2.25) -- (-2.75,0.75);
    \draw
        (-4.25,1.25) -- (-3.75,0.75);
    \draw
        (-1.25,4.25) -- (-0.75,3.75);
    \draw [red, thick]
        (-3,2) -- (-2,2) -- (-2,3) -- (-3,3) -- (-3,2);

\begin{scope}[shift={(0.25,1)}]
    \draw[thick] 
        (0,3) -- (3,0);
    \draw[thick]
        (0,3) -- (0,0);
    \draw[thick] 
        (3,3) -- (3,0);

    \draw[thick]
        (0,0) -- (3,0);
    \draw[thick] 
        (0,3) -- (3,3);
    \filldraw[green] (0,0) circle (2pt);
    \filldraw[blue] (1,0) circle (2pt);
    \filldraw[blue] (2,0) circle (2pt);
    \filldraw[green] (3,0) circle (2pt);
    \filldraw[blue] (0,1) circle (2pt);
    \filldraw[blue] (1,1) circle (2pt);
    \filldraw[blue] (2,1) circle (2pt);
    \filldraw[blue] (3,1) circle (2pt);
    \filldraw[blue] (0,2) circle (2pt);
    \filldraw[blue] (1,2) circle (2pt);
    \filldraw[blue] (2,2) circle (2pt);
    \filldraw[blue] (3,2) circle (2pt);
    \filldraw[green] (0,3) circle (2pt);
    \filldraw[blue] (1,3) circle (2pt);
    \filldraw[blue] (2,3) circle (2pt);
    \filldraw[green] (3,3) circle (2pt);
    \filldraw[red] (1.5,0) circle (2pt);
    \filldraw[red] (1.5,1.5) circle (2pt);
    \filldraw[red] (1.5,3) circle (2pt);
    \filldraw[red] (0,1.5) circle (2pt);
    \filldraw[red] (3,1.5) circle (2pt);
\end{scope}
    
\end{tikzpicture}

\caption{Lifts of the lines determining $\mathcal{S}_A$ for $A = \{e_1, e_2, e_1 + e_2\}$ are depicted on the left with a fundamental domain outlined in red. On the right, the corresponding hyperplane arrangement on the torus is drawn where $L_2$ is red, $L_3$ is blue, and $L_2 \cap L_3$ is green. The top dimensional strata do not contain points in $L_2$.} 
\label{fig:p1}
\end{figure}
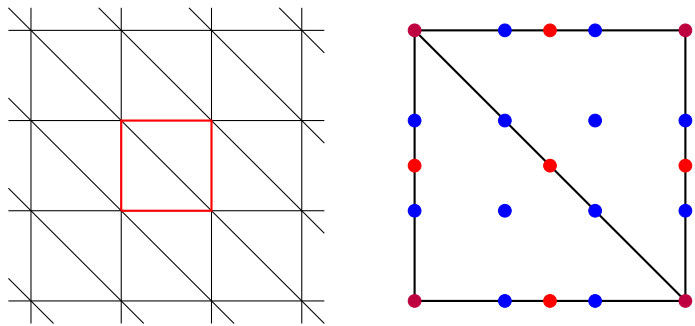

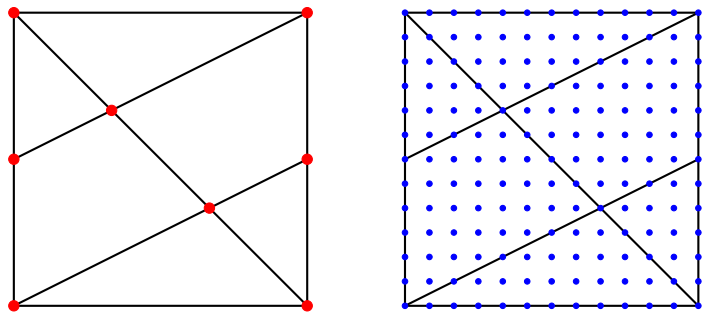
\begin{figure}[!b] 
\centering 

\begin{tikzpicture}[scale=1.3]
    \draw[thick]
        (-4,3) -- (-1,0);
    \draw[thick]
        (-4,3) -- (-4,0);
    \draw[thick]
        (-1,3) -- (-1,0);
    \draw[thick]
        (-4,0) -- (-1,1.5);
    \draw[thick]
        (-4,0) -- (-1,0);
    \draw[thick]
        (-4,3) -- (-1,3);
    \draw[thick]
        (-4,1.5) -- (-1,3);
    \filldraw[red] (-4,0) circle (1.5pt);
    \filldraw[red] (-4,1.5) circle (1.5pt);
    \filldraw[red] (-1,1.5) circle (1.5pt);
    \filldraw[red] (-1,3) circle (1.5pt);
    \filldraw[red] (-1,0) circle (1.5pt);
    \filldraw[red] (-4,3) circle (1.5pt);
    \filldraw[red] (-2,1) circle (1.5pt);
    \filldraw[red] (-3,2) circle (1.5pt);

    \draw[thick]
        (0,3) -- (3,0);
    \draw[thick]
        (0,3) -- (0,0);
    \draw[thick]
        (3,3) -- (3,0);
    \draw[thick]
        (0,0) -- (3,1.5);
    \draw[thick]
        (0,0) -- (3,0);
    \draw[thick]
        (0,3) -- (3,3);
    \draw[thick]
        (0,1.5) -- (3,3);
    \foreach \x in {0, 1/4, 2/4, 3/4, 1, 5/4, 6/4, 7/4, 2, 9/4, 10/4, 11/4, 3} {

    \foreach \y in {0, 1/4,1/2,3/4,1,5/4, 3/2, 7/4, 2, 9/4, 5/2, 11/4, 3} {
    \filldraw[blue] (\x,\y) circle (0.8pt);
    }
}

\end{tikzpicture}

\caption{The stratification $\mathcal{S}_A$ on the torus with $A = \{e_1, e_2, e_1 + e_2, -e_1 + 2e_2\}$ is depicted on the left. The zero-dimensional strata are red dots, the one-dimensional strata are solid black lines, and the two-dimensional strata are white regions. On the right, we have the same stratification with $L_{12}$ drawn in blue. There are points in $L_{12}$ in all strata.}
\label{fig:morecomplicated}
\end{figure}

\clearpage 

\printbibliography

\Addresses

\end{document}